\newtheorem{remark}{Remark}
\newtheorem{theorem}{Theorem}[section]
\newtheorem{corollary}{Corollary}[theorem]
\newtheorem{lemma}[theorem]{Lemma}
\def\bfq{ {\bf q} }
\def\bfql{ {\bf q}_\ell }
\def\bfqr{ {\bf q}_r }
\def\bff{ {\bf f} }
\def\bfg{ {\bf g} }
\def\bfw{ {\bf w} }
\def\bfZero{ {\bf 0} }
\newcommand{\po}{p^{[0]}}
\newcommand{\ho}{h^{[0]}}
\newcommand{\hot}{\tilde{h}^{[0]}}
\newcommand{\hk}{h^{[k]}}
\newcommand{\xk}{x^{[k]}}
\newcommand{\real}{\mathbb{R}}
\newcommand{\hmin}{h_{\min}}
\newcommand{\hmax}{h_{\max}}
\newcommand{\pmin}{p_{\min}}
\newcommand{\pmax}{p_{\max}}
\newcommand{\lr}{{\ell \! / \! r}}
\renewcommand{\SS}{\mathrm{SS}}
\newcommand{\PV}{\mathrm{PV}}
\newcommand{\CC}{\mathrm{CC}}
\newcommand{\RR}{\mathrm{RR}}
\newcommand{\AV}{\mathrm{AV}}
\newcommand{\QA}{\mathrm{QA}}
\newcommand{\revX}[1]{#1}  
\newcommand{\revY}[1]{#1}  
\author[1]{Carlos Muñoz-Moncayo\thanks{carlos.munozmoncayo@kaust.edu.sa}}
\author[1]{Manuel Quezada de Luna\thanks{manuel.quezada@kaust.edu.sa}}
\author[1]{David I. Ketcheson\thanks{david.ketcheson@kaust.edu.sa}}
\affil[1]{King Abdullah University of Science and Technology (KAUST), Thuwal 23955-6900, Saudi Arabia}
\begin{document}
\title{A Comparative Study of Iterative Riemann Solvers for the Shallow Water and Euler Equations}

\date{\today}
\maketitle

\abstract{
We investigate the achievable efficiency of exact solvers for the Riemann
problem for two systems of first-order hyperbolic PDEs: the shallow water equations
and the Euler equations of compressible gas dynamics.
Many approximate solvers have been developed for these systems;
exact solution algorithms have received less attention because \revX{the } computation of the
exact solution typically requires \revX{an }  iterative solution of algebraic
equations, which can be expensive or unreliable.
We investigate a range of iterative algorithms and initial guesses.
In addition to existing algorithms, we propose simple new algorithms
that are guaranteed to converge and to remain in the range of physically
admissible values at all iterations.
We apply the existing and new iterative schemes to an ensemble of test Riemann problems.
For the shallow water equations, we find
that Newton's method with a simple modification converges quickly and reliably.
For the Euler equations we obtain similar results; however, when the
required precision is high, a combination of Ostrowski and Newton
iterations converges faster.  These solvers
are slower than standard approximate solvers like Roe and HLLE, but come
within a factor of two in speed.  We also provide a preliminary comparison
of the accuracy of a finite volume discretization using an exact solver
versus standard approximate solvers.

}

\section{Background and Motivation}

The Riemann problem is central to both the theory and
the numerical solution of hyperbolic partial differential equations (PDEs).
Most modern numerical methods for hyperbolic problems --
including finite difference, finite volume, and finite element discretizations --
are based on approximating the solution of the Riemann problem.
In the course of a simulation, billions of Riemann problems may be solved 
at each step, so fast solvers are essential.  However, the exact
solution of the Riemann problem typically requires the solution
of a nonlinear algebraic system, which may be costly.  It is principally for
this reason that approximate Riemann solvers have been preferred,
and exact Riemann solvers are rarely, if ever, employed.  In this work
we use the term \emph{exact} to refer to a solver that guarantees a solution
accurate to within a prescribed tolerance, such as machine precision.

We focus on what are perhaps the most-studied hyperbolic
systems: the shallow water (or Saint-Venant) equations and the compressible
Euler equations.  These two systems have a similar structure and have played
a key role in the development of numerical methods.  A wide variety of
approximate Riemann solvers is available for each of them; see \cite{toro2001shock,toro2013riemann}
for an extensive exposition.  The exact solution
for either of these systems can be determined by solving a scalar nonlinear
algebraic equation.  The two major perceived drawbacks of doing so are
the cost of an iterative algebraic solver and the lack of robustness
(i.e., the lack of a guarantee of convergence).

Although approximate Riemann solvers have long since emerged as preferable
to exact solvers for numerical computation, there is a small body of work devoted
to developing fast exact Riemann solvers for the Euler equations of gas
dynamics; the most advanced methods available were developed about 30 years ago 
\cite{gottlieb1988assessment,pike1993riemann}.
We mention also one very recent project in which the exact solution was obtained by training
a neural network \cite{gyrya2020machine}; however, evaluating the neural network
was significantly slower than applying traditional approximate solvers.

In simulation, exact solvers can be employed adaptively, to be used in regions
of more extreme conditions, while approximate solvers are used in other regions
\cite{toro1991,toro1993linearized,quirk1994alternative}.
Exact solvers are also of independent interest, since they are required in the
random choice method \cite{glimm1965solutions} and may be used for other theoretical purposes.
Another potential application of the solvers discussed in this work are the ADER 
methods, which are designed to attain arbitrarily high order of accuracy for finite volume and finite element 
solution of balance laws, and require an approximation of the exact Riemann solution for the homogeneous system
as an essential step; see e.g. \cite{titarev2002ader,toro2020ader,helzel2019ader}.
Furthermore, it seems worthwhile to revisit the question of whether exact Riemann
solvers may be of practical use, for three reasons.  First, relative changes
in the computational cost of flops versus memory access over the last two
decades may mean that iterative solvers are now more competitive.  Second,
some new ideas have appeared in the literature recently regarding potential
initial guesses and iterative methods.
Third, many codes that make use of Riemann solvers also have other expensive
computational parts; the cost of the iterative solver may not be dominant
relative to these other components.  If the cost is not prohibitive, a
robust solver that guarantees a solution with a desired accuracy and required properties is attractive.

We survey and test established methods, while also
extending and improving them in various ways.  Whereas
previous studies \cite{gottlieb1988assessment,pike1993riemann} 
focused exclusively on the Euler equations, we also study the shallow water equations,
which have a similar structure.  We consider a wider range of
initial guesses and iterative methods.
We also address some questions related to convergence criteria that prevented
previous approaches from giving accurate results in certain regimes.

The specific contributions of the present work are:
\begin{itemize}
    \item the introduction and testing of some new initial guesses and iterative solvers;
    \item the first methods, for both the Euler and shallow water equations,
        that are provably-convergent for all initial data; these are also proven to
        yield a physically-admissible state at every iteration (see Section
        \ref{sec:newton}).  For the shallow water equations, this
        is based on a new proven upper bound for the depth (see Section \ref{IG: RR}).
    \item the most extensive comparison of iterative Riemann solvers to date;
    \item an updated comparison of the relative computational cost of
        approximate versus iterative solvers, using typical modern computational hardware
        (see Section \ref{sec:FV}).
\end{itemize}

The remainder of this work is organized as follows.  In Section \ref{sec:sw}
we discuss the shallow water equations, reviewing the structure of the Riemann
solution.  We review many existing initial guesses and iterative techniques for
its solution, and present some new ones.
A comparison of all of these methods is given in Section \ref{sec:Results SWEs}.
In Section \ref{sec:Euler} we provide a similar analysis and evaluation of
iterative solvers for the Riemann problem for the Euler equations.
In Section \ref{sec:FV}, we conduct a preliminary comparison of the best
approaches found in Sections \ref{sec:sw} and \ref{sec:Euler} with standard
approximate Riemann solvers in finite volume simulations .  

The code used for this work is available in the reproducibility repository \cite{repository}.

\section{The shallow water equations}\label{sec:sw}

In two space dimensions, the shallow water equations take the form
  \begin{align}
    \partial_t \bfq + \partial_x \bff(\bfq) + \partial_y \bfg(\bfq) = \bfZero,
  \end{align}
where
  \begin{align}
    \bfq = \begin{bmatrix}
      h \\
      hu \\
      hv
    \end{bmatrix}, \quad
    \bff(\bfq) = \begin{bmatrix}
      hu \\
      hu^2+\frac{1}{2}gh^2 \\
      huv
    \end{bmatrix}, \quad
    \bfg(\bfq) = \begin{bmatrix}
      hv \\
      huv \\
      hv^2+\frac{1}{2}gh^2 \\
    \end{bmatrix}.
   \end{align}
In this section we briefly review the exact solution of this problem and then
describe several iterative algorithms for its approximation.

    \subsection{Exact solution of the Riemann problem}
    For simplicity we consider a Riemann problem with coordinate-aligned initial data.
    The solution for any other plane-wave Riemann problem can be obtained from what
    follows by simple rotation.
    The structure of the Riemann solution is then the same as for the
    one-dimensional shallow water equations with a passive tracer:
    \begin{subequations}
    \label{eq: RP SW_Eqns}
  \begin{align}
    \partial_t \bfq + \partial_x \bff(\bfq) = \bfZero, \quad \forall (x,t)\in\mathbb{R}\times\mathbb{R}^+, \label{eq: RP SW_Eqns eq}\\
    \bfq(x,0)=\begin{cases}\label{eq: RP SW_Eqns IC}
    \bfq_\ell, \quad \text{ if } x<0, \\
    \bfq_r, \quad \text{ if } x>0.
    \end{cases}
  \end{align} 
    \end{subequations}
    Here we review the solution structure; a more detailed discussion can be found in several extensive works
   \cite{toro2001shock,levequefvmbook,ketcheson2020riemann}.
    It is convenient to make use of the vector of primitive variables $\bfw = [h, u, v]^T$.
    The solution involves the initial states $\bfw_\ell$ and $\bfw_r$, given by the initial data 
    \eqref{eq: RP SW_Eqns IC}, as well as two intermediate states $\bfw_{*\ell}$, $\bfw_{*r}$.
    These states are connected by three waves.  We would like to determine the middle states
    $\bfw_{*\ell}=(h_*,u_*,v_{*\ell})^T$, $\bfw_{*r}=(h_*,u_*,v_{*r})^T$ and the
    structure of the waves connecting them. 
    The characteristic speeds of the system \eqref{eq: RP SW_Eqns eq} are given by
    the eigenvalues of the flux Jacobian $A(\bfq)=\bff^\prime(\bfq)$:
    \begin{align}
      \lambda_1=u-\sqrt{gh}\leq\lambda_2=u\leq\lambda_3=u+\sqrt{gh}.
    \end{align}
    We will refer to the wave associated with a given eigenvalue by the corresponding
    index. 
    For this problem, the 2-wave is a linearly degenerate contact wave (or contact discontinuity).
    The values of $h$ and $u$ are constant across the 2-wave, so we write simply
    $h_*=h_{*\ell}=h_{*r}$ and $u_*=u_{*\ell}=u_{*r}$.
    The 1- and 3-waves are genuinely nonlinear.  The 1-wave is a shock if $h_*>h_\ell$,
    in which case the states on either side of the 1-wave are related by the Rankine-Hugoniot
    condition.  On the other hand, if $h_*<h_\ell$, the 1-wave is a rarefaction, in
    which case the states on either side are related through the corresponding Riemann
    invariant.  Similar statements apply to the 3-wave and the relation between $h_*$ and $h_r$.
    Taking these conditions together, one finds that $h_*$ is given by the unique real root of 
    \begin{subequations}
    \label{eq:phi SWEs}
    \begin{align}
      \phi(h;\bfq_\ell,\bfq_r) &= f(h;h_\ell) + f(h;h_r) + u_r - u_\ell, 
    \end{align}
    where
    \begin{align}	
    \label{eq:f SWEs}
      f(h;h_\lr) &= \begin{cases}
        f_R(h;h_\lr) := 2(\sqrt{gh}-\sqrt{gh_\lr}), 
        & \text{ if } h\leq h_\lr \text{ (rarefaction)}, \\
        f_S(h;h_\lr) := (h-h_\lr)\sqrt{\frac{1}{2}g\left(\frac{h+h_\lr}{h h_\lr}\right)}, 
        & \text{ if } h>h_\lr \text{ (shock)}.
      \end{cases}
    \end{align}
    \end{subequations}
The function $\phi$ is sometimes referred to as the depth function
\cite{toro2001shock}.  Since we are interested in finding a
root of $\phi$ for (fixed but arbitrary) given states $\bfq_\ell$ and $\bfq_r$,
we will write simply $\phi(h)$ for brevity.
Once $h_*$ is found, the corresponding velocity is given by 
\begin{align}
 \label{eq: speed star SWEs}
  u_*=\frac{1}{2}(u_\ell+u_r)+\frac{1}{2}[f(h_*;h_r)-f(h_*;h_\ell)].
\end{align}

In this work we focus on Riemann problems for which the solution satisfies $h>0$ at all points.
Riemann problems in which dry states appear or are initially present can be detected \emph{a priori}
and solved by explicit formulas.
\revY{For practical use, an implementation of the dry-state treatment presented in \cite[Sec. 6.3]{toro2001shock}
is provided in the reproducibility repository for this work \cite{repository}.}

\subsection{Properties of the depth function}
\label{sec: properties depth SWE}
For $h>0$, the first three derivatives of $f(h,h_\lr)$ are given by
\begin{subequations}
\begin{align}
  f'(h;h_\lr) = \begin{cases}
    \frac{\sqrt{g}}{\sqrt{h}}, & \text{ if } 0<h\leq h_\lr, \\
    \frac{g(2h^2+hh_\lr+h_\lr^2)}{2\sqrt{2}h^2\sqrt{g\left(\frac{1}{h}+\frac{1}{h_\lr}\right)}h_\lr}
    & \text{ if } h>h_\lr>0, 
  \end{cases}, 
\end{align}
\begin{align}
  f''(h;h_\lr) = \begin{cases}
    \frac{-\sqrt{g}}{2\sqrt{h^3}}, & \text{ if } 0<h\leq h_\lr, \\
    \frac{-g^2(5h+3h_\lr)}{4\sqrt{2}h^4\sqrt{g^3\left(\frac{1}{h}+\frac{1}{h_\lr}\right)^3}},
    & \text{ if } h>h_\lr>0,
  \end{cases}, 
\end{align}
and
\begin{align}			
  f'''(h;h_\lr) = \begin{cases}
    \frac{3\sqrt{g}}{4\sqrt{h^5}}, & \text{ if } 0<h\leq h_\lr, \\
    \frac{3gh_\lr(10h^2+13hh_\lr+5h_\lr^2)}{8\sqrt{2}h^4\sqrt{g\left(\frac{1}{h}+\frac{1}{h_\lr}\right)}(h+h_\lr)^2}
    & \text{ if } h>h_\lr>0,
  \end{cases}.
\end{align}
\end{subequations}
Observe that (for $h>0$) $\phi'(h)=f'(h;h_\ell)+f'(h;h_r)>0$ regardless of whether the
nonlinear waves are shocks or rarefactions.  Similarly, (for $h>0$) $\phi''(h)<0$ and
$\phi'''(h)>0$  \cite{toro2001shock}.  Finally, observe that $\phi(0)\le 0$ and
$\lim_{h\to\infty} \phi(h) = +\infty$.  Together, these properties guarantee
that $\phi(h)$ has a unique real root. Also, letting
\begin{subequations} \label{eq:hminmax}
\begin{align}
    \hmin & := \min \left(h_{\ell}, h_{r}\right), \\
    \hmax & := \max \left(h_{\ell}, h_{r}\right),
\end{align}
\end{subequations}
we have that if $\phi\left(\hmin\right)>0$ both waves are rarefactions, and if
$\phi\left(\hmax\right)<0$ both waves are shocks.

\subsection{Computation of the initial guess}\label{subsec: IG}
    
Since we will proceed iteratively to approximate $h_*$, it is advantageous to
start with an accurate initial guess $\ho$ that can be computed efficiently.
Moreover, as we will see, a lower bound $0<h_{-}\leq h_*$ will also be required
for the iterative part of most algorithms.
We describe seven possible initial guesses in the following sections.

\subsubsection{Two-rarefaction approximation (RR)}\label{IG: RR}
 If we assume that the 1- and 3- waves are rarefactions, then the root of $\phi$ can be computed explicitly:
 \begin{equation}
 \label{eq:IG RR}
 h_\RR := \frac{\left(u_{\ell}-u_{r}+2 \sqrt{g h_{\ell}}+2 \sqrt{g h_{r}}\right)^{2}}{16 g}.
 \end{equation}
 In case one or both of the waves is a shock, this is not the exact solution
 but can serve as an initial guess.  This estimate never underestimates
 the true depth, as we now prove (see a similar result
 obtained by Guermond \& Popov for the Euler equations \cite[Section 4.1]{guermond2016fast}).
 \begin{lemma} \label{lemma:upper bound two rarefaction}
 $h_*\leq h_\RR$.
 \end{lemma}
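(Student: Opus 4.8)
The plan is to compare $\phi$ against the depth function whose root is \emph{exactly} $h_\RR$: the ``all-rarefaction'' function obtained by using the rarefaction branch $f_R$ for both nonlinear families irrespective of the sign of $h-h_\lr$. Concretely, I would set $\tilde f(h;h_\lr) := 2(\sqrt{gh}-\sqrt{gh_\lr})$ for every $h>0$ and define
\begin{align*}
  \tilde\phi(h) := \tilde f(h;h_\ell) + \tilde f(h;h_r) + u_r - u_\ell .
\end{align*}
A short computation confirms that $h_\RR$ from \eqref{eq:IG RR} is the unique root of $\tilde\phi$, and that $\tilde\phi$ is strictly increasing since $\tilde\phi'(h) = 2\sqrt{g}/\sqrt{h}>0$.

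The key step is the pointwise inequality $f(h;h_\lr)\ge\tilde f(h;h_\lr)$ for all $h>0$. For $h\le h_\lr$ the two sides are equal by definition, so the content is the case $h>h_\lr$, where $f=f_S$. Using $h-h_\lr=(\sqrt h-\sqrt{h_\lr})(\sqrt h+\sqrt{h_\lr})$ and dividing the target inequality by the positive factor $\sqrt g\,(\sqrt h-\sqrt{h_\lr})$, it reduces to $(\sqrt h+\sqrt{h_\lr})^2(h+h_\lr)\ge 8hh_\lr$. Setting $a=\sqrt h$ and $b=\sqrt{h_\lr}$, this is $(a+b)^2(a^2+b^2)\ge 8a^2b^2$, which I would obtain by multiplying the two elementary estimates $(a+b)^2\ge4ab$ and $a^2+b^2\ge2ab$. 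This is the only genuinely nontrivial computation in the proof, and it is short.

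Summing over the two families yields $\phi(h)\ge\tilde\phi(h)$ for all $h>0$, hence in particular $\phi(h_\RR)\ge\tilde\phi(h_\RR)=0=\phi(h_*)$. Since $\phi$ is strictly increasing (established in Section~\ref{sec: properties depth SWE}), this forces $h_*\le h_\RR$, proving the lemma. I do not anticipate any real obstacle: everything reduces to the shock-versus-rarefaction comparison $f_S\ge f_R$, which is an AM--GM inequality, combined with the monotonicity of $\phi$ that is already in hand. One could equivalently phrase the argument without the auxiliary function by plugging $h_\RR$ directly into $\phi$ and bounding $f_S-f_R$ termwise, but the comparison-function version seems cleanest.
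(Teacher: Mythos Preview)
Your proof is correct and follows the same overall architecture as the paper's: introduce the all-rarefaction depth function $\tilde\phi$ (the paper calls it $\phi_\RR$), show the pointwise inequality $f_S(h;h_\lr)\ge f_R(h;h_\lr)$ for $h>h_\lr$, deduce $\phi\ge\tilde\phi$, and then use monotonicity of $\phi$ to conclude $h_*\le h_\RR$.

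The one genuine difference is how the key inequality $f_S\ge f_R$ is established. The paper substitutes $x=h/h_\lr$, writes $f_S(x)=g(x)f_R(x)$ with $g(x)=\dfrac{\sqrt{x+1}(\sqrt{x}+1)}{2\sqrt{2}\sqrt{x}}$, and then shows $g(1)=1$ and $g'(x)>0$ for $x>1$ via calculus. Your reduction to $(a+b)^2(a^2+b^2)\ge 8a^2b^2$ followed by the product of the two AM--GM estimates $(a+b)^2\ge4ab$ and $a^2+b^2\ge2ab$ is shorter and entirely elementary, avoiding any differentiation. The paper's calculus route has the minor advantage of giving strict inequality $f_S>f_R$ for $h>h_\lr$ directly (and hence $h_*<h_\RR$ whenever at least one wave is a shock), whereas your AM--GM product yields equality at $a=b$; but since $h>h_\lr$ forces $a\ne b$, strictness is immediate in your argument too if you care to note it.
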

 \begin{proof}

 Let
\begin{align}
  \phi_\RR(h) := 2(\sqrt{gh}-\sqrt{gh_\ell})+2(\sqrt{gh}-\sqrt{gh_r})+u_r-u_\ell
  = f_R(h,h_\ell)+f_R(h,h_r)+u_r-u_\ell,
\end{align}
which is the corresponding $\phi(h)$ function if both waves were rarefactions.
Let us first prove that 
\begin{equation}
h>h_\lr\quad \Rightarrow \quad f_R(h,h_\lr)< f_S(h,h_\lr).
\end{equation}
Rewrite $f_R(h,h_\lr)$ and $f_S(h,h_\lr)$ as
\begin{align*}
  f_R = 2\sqrt{gh_\lr}\left(\frac{x-1}{\sqrt{x}+1}\right), \qquad
  f_S = \frac{\sqrt{gh_\lr}}{\sqrt{2}}\left(\frac{(x-1)\sqrt{x+1}}{\sqrt{x}}\right), 
\end{align*}
where $x:=\frac{h}{h_\lr}$. We want to show that $f_R(x)< f_S(x)$ when $x> 1$. 
To see this, rewrite $f_S(x)$ as 
\begin{align*}
  f_S(x) = \underbrace{\frac{\sqrt{x+1}(\sqrt{x}+1)}{2\sqrt{2}\sqrt{x}}}_{=:g(x)}
  \left[2\sqrt{gh_\lr}\left(\frac{x-1}{\sqrt{x}+1}\right)\right] 
  = g(x)f_R(x).
\end{align*}
The first derivative of $g(x)$ is 
\begin{align*}
  g'(x) = \frac{\sqrt{x^3}-1}{4\sqrt{2x^3(x+1)}}.
\end{align*}
Finally, note that $g(1)=1$ and $g'(x)>0, \forall x>1$.
Therefore, $g(x)>1, \forall x>1 \implies f_R(x)< f_S(x), \forall x>1$. If both waves are rarefactions there is nothing to prove. If both waves are shocks, then for every $h>h_{\max}$
\begin{align*}
 \phi(h)	&= f_S(h,h_{\min})+f_S(h,h_{\max})+u_r-u_\ell\\
								&> f_R(h,h_{\min})+f_R(h,h_{\max})+u_r-u_\ell\\
								&=\phi_\RR(h).
\end{align*}
Therefore $h_*<h_\RR$. If there is one rarefaction and one shock, we proceed analogously.

 \end{proof}
\subsubsection{Average depth (AV)}\label{IG: AV}
A simple and inexpensive initial guess is the arithmetic average:
\begin{equation} \label{eq:IG AV}
\ho = h_\AV := \frac{h_\ell+h_r}{2}.
\end{equation}

 \subsubsection{Quadratic approximation (QA)}\label{IG: QA}
We also consider an initial guess presented in Lemma 3.8 of the recent work by Azerad et. al.\cite{azerad2017well}, given by
\begin{align} \label{eq:IG QA}
\ho = {h}_\QA:=\left\{\begin{array}{ll}
h_\RR& \text { if } 0\leq \phi(x_0 h_{\min}), \\
\sqrt{h_{\min } h_{\max }}\left(1+\frac{\sqrt{2}\left(u_{\ell}-u_{r}\right)}{\sqrt{g h_{\min }}+\sqrt{g h_{\max }}}\right) & \text { if } \phi(x_0 h_{\max})<0, \\
\left(-\sqrt{2 h_{\min }}+\sqrt{3 h_{\min }+2 \sqrt{2 h_{\min } h_{\max }}+\sqrt{\frac{2}{g}}\left(u_{\ell}-u_{r}\right) \sqrt{h_{\min }}}\right)^{2} & \text { otherwise } ,
\end{array}\right.
\end{align}
where $x_{0}=(2 \sqrt{2}-1)^{2}$. It also satisfies the property $h_\QA \ge h_*$.

\subsubsection{Primitive-variable linearized solver (PV)}
Any approximate-state Riemann solver can be used to provide an initial guess for
an iterative solver.  Here we consider the use of the linearized solver
introduced by Toro \cite{toro2001shock}.  Linearizing the system
around the state $\hat{\bfw}$ in primitive variables gives
\begin{equation}
\label{eq:linearized system}
\bfw_t+\bf{A}(\hat{\bfw})\bfw_x=0.
\end{equation}
Solving this system exactly we get
\begin{align}
\hat{h}=\frac12(h_\ell+h_r)+\frac{\frac12(u_\ell-u_r)}{\sqrt{2g/(h_\ell+h_r)}}.
\end{align}
We consider a modification of this solution, constructed to enforce positivity preservation \cite{toro2001shock}
\begin{align}\label{eq:IG PV SWEs}
h^{[0]}=h_\PV := \frac12(h_\ell+h_r)+\frac{\frac14(u_\ell-u_r)(h_\ell+h_r)}{\sqrt{gh_\ell}+\sqrt{gh_r}}.
\end{align}

\subsubsection{Two-shock approximation (SS)} \label{IG:SS SWEs}





If we assume that both waves are shocks, then the depth function \eqref{eq:phi SWEs} becomes
\begin{equation} \label{eq:phi SS SWEs}
\phi_\SS(h)=(h-h_\ell)\sqrt{\frac12 g \left( \frac{h+h_\ell}{h\,h_\ell}\right)}+(h-h_r)\sqrt{\frac12 g \left( \frac{h+h_r}{h\,h_r}\right)} + u_r-u_\ell.
\end{equation} 
Since we cannot find a root of this function explicitly, we
approximate it linearly as is done by Toro \cite{toro2001shock}.
We set $y_\lr(h)=\sqrt{\frac12 g \left(
\frac{h+h_\lr}{h\,h_\lr}\right)}$ and assume that we have an estimate of the root,
denoted by $\bar{h}$. Then our problem becomes
\begin{equation}
(h-h_\ell)y_\ell(\bar{h})+(h-h_r) y_r(\bar{h})+u_r-u_\ell=0,
\end{equation}
and it follows that
\begin{equation}\label{eq:IG SS SWEs}
\ho = h_\SS := \frac{h_\ell y_\ell(\bar{h})+h_r y_r(\bar{h})-u_r+u_\ell}{y_\ell(\bar{h})+y_r(\bar{h})}.
\end{equation}
Here we use the estimate $\bar{h}=h_\PV$.

\subsubsection{Convex combination} \label{IG: CC}
If $\phi(\hmin)>0$, the solution has two rarefactions and $h_*$ is given explicitly
by \eqref{eq:IG RR}.  Otherwise, we define the lower bound
\begin{equation} \label{eq: Initial quadratic left}
h_-=\begin{cases}
    \hmax & \text { if } \phi(\hmax)<0 \qquad\qquad\qquad \text { (1- and 3-shocks), } \\
    \hmin & \text { if } \phi(\hmin)<0<\phi(\hmax) \qquad \text { (1- (3-) shock and 3- (1-) rarefaction), }
    \end{cases}
\end{equation}
and the upper bound
\begin{equation} \label{eq: Initial quadratic right}
h_+=\begin{cases}
 h_\RR& \text { if } \phi(h_{max})<0 \qquad\qquad\qquad   \text { (1- and 3-shocks), } \\
\min(h_{\max},h_\RR) & \text { if } \phi(h_{min})<0<\phi(h_{max})\qquad  \text { (1- (3-) shock and 3- (1-) rarefaction), }
\end{cases}
\end{equation}
for $h_*$.

\revY{We can compute an initial guess that is a convex combination of these bounds,
choosing the coefficients such that the initial guess is biased towards the bound whose residual is closer to 0,}
\begin{align} \label{eq:IG CC}
\ho = h_\CC := \frac{|\phi(h_{+})|}{|\phi(h_{-})|+|\phi(h_{+})|}\, h_{-}+\frac{|\phi(h_{-})|}{|\phi(h_{-})|+|\phi(h_{+})|}\, h_{+}=\frac{\phi(h_{+})h_{-}-\phi(h_{-})h_{+}}{\phi(h_{+})-\phi(h_{-})}.
\end{align}
\revY{This initial guess can be further interpreted as the root of an affine function that intersects with $\phi$ at $h_-$ and $h_+$.}

\subsubsection{HLLE} \label{IG:HLLE}
We also consider the HLLE solver, first introduced by Harten, Lax, and van Leer, and then improved by Einfeldt \cite{einfeldt1988godunov}.
It is based on the assumption that the solution consists of three constant states, $q_\ell$, $q_m$, and $q_r$, separated by two waves.
Assuming that the speeds of these waves are $s_\ell$ and $s_r$ and, by imposing conservation on the approximate solution, we get 
\begin{align}\label{eq:IG HLLE}
    q_m=\frac{f(q_r)-f(q_\ell)-s_r\,q_r+s_\ell\,q_\ell}{s_\ell-s_r}.
\end{align}
Here $s_\ell$ and $s_r$ are chosen in terms of the eigenvalues of $f'$ evaluated at the Roe average \cite{levequefvmbook}. Thus, we can set $\ho=h_{\mathrm{HLLE}}:=h_m$.

    \subsection{Iterative algorithms} \label{subsec: Iterative algorithms}
There are several approaches that can be taken for the iterative process. In
this section we describe the algorithms tested in this study.

        \subsubsection{Positive Newton method} \label{sec:newton}
        
Perhaps the most widely used root-finding method is the Newton-Rhapson iteration,
also known as Newton's method.  For a function $\psi(x)$, this method starts with
an initial guess $x^{[0]}$ and proceeds by the iteration
\begin{align} \label{eq:newton}
    x^{[k+1]} = x^{[k]} - \frac{\psi(x^{[k]})}{\psi'(x^{[k]})}.
\end{align}
In general, Newton's method is not guaranteed to converge.  But for
functions sharing some of the properties of $\phi(h)$, convergence is
guaranteed by starting from the left.

\begin{lemma} \label{lem:newton}
    Let $\psi(x) : (0,\infty)\to \real$ be a real-valued, twice
    continuously-differentiable function such that $\lim_{x\to 0}\psi(x)<0$,
    $\psi'(x)>0$, and $\psi''(x)<0$.  Then $\psi$ has a unique real root $x_*>0$.
    Let $x^{[0]}\le x^*$.  Then the sequence ${x^{[k]}}$ produced by Newton's
    method \eqref{eq:newton} has the properties that $x^{[k]}\in[x_0,x_*)$ for
    all $k$ and $x^{[k]}\to x_*$ as $k\to\infty$.
\end{lemma}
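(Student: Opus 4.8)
The plan is to show, by induction on $k$, that the Newton iterates stay in the interval $[x^{[0]}, x_*)$ and form a monotone increasing sequence, and then invoke monotone convergence plus continuity of $\psi$ to conclude the limit is the root. The existence and uniqueness of $x_*$ is immediate from the hypotheses: since $\psi$ is continuous with $\lim_{x\to 0}\psi(x) < 0$ and $\psi' > 0$ everywhere, $\psi$ is strictly increasing, and (because $\psi$ is concave with positive slope, so it cannot be bounded above — if it were, $\psi'$ would have to tend to $0$, contradicting... actually one should argue this carefully) $\psi$ eventually becomes positive; by the intermediate value theorem there is exactly one root $x_* > 0$. I would state this as a preliminary observation before the induction.

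For the inductive step, assume $x^{[k]} \in [x^{[0]}, x_*)$. First, $\psi'(x^{[k]}) > 0$ so the iteration is well-defined. Since $x^{[k]} < x_*$ and $\psi$ is increasing, $\psi(x^{[k]}) < \psi(x_*) = 0$, hence $x^{[k+1]} = x^{[k]} - \psi(x^{[k]})/\psi'(x^{[k]}) > x^{[k]} \ge x^{[0]}$, giving the lower bound and monotonicity in one stroke. For the upper bound $x^{[k+1]} < x_*$, I would use concavity: the tangent line to $\psi$ at $x^{[k]}$ lies above the graph of $\psi$ on all of $(0,\infty)$, so in particular at $x_*$ we have $0 = \psi(x_*) \le \psi(x^{[k]}) + \psi'(x^{[k]})(x_* - x^{[k]})$, which rearranges (dividing by $\psi'(x^{[k]}) > 0$) to $x_* - x^{[k]} \ge -\psi(x^{[k]})/\psi'(x^{[k]}) = x^{[k+1]} - x^{[k]}$, i.e. $x^{[k+1]} \le x_*$. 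Strictness ($x^{[k+1]} < x_*$, not just $\le$) follows because $\psi$ is strictly concave, or alternatively one notes that equality $x^{[k+1]} = x_*$ would force $\psi(x_*)$ to lie on the tangent line, impossible unless $x^{[k]} = x_*$.

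Having established that $\{x^{[k]}\}$ is increasing and bounded above by $x_*$, it converges to some limit $L \in (x^{[0]}, x_*]$. Passing to the limit in the iteration $x^{[k+1]} \psi'(x^{[k]}) = x^{[k]}\psi'(x^{[k]}) - \psi(x^{[k]})$ and using continuity of $\psi$ and $\psi'$, together with $\psi'(L) > 0$, yields $\psi(L) = 0$, so $L = x_*$ by uniqueness of the root.

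The main obstacle, or at least the point requiring the most care, is the justification that $\psi$ actually attains a positive value, i.e. that a root exists — the hypotheses as stated ($\psi' > 0$, $\psi'' < 0$, $\lim_{x\to 0}\psi < 0$) do not by themselves force $\psi \to +\infty$ (consider $\psi(x) = -e^{-x}$, which has $\psi' > 0$, $\psi'' < 0$, $\lim_{x\to 0}\psi = -1 < 0$, yet stays negative). So strictly speaking the lemma as phrased needs either an extra hypothesis (such as $\lim_{x\to\infty}\psi(x) > 0$, which does hold for $\phi$) or one interprets it as applying only when a root is known to exist; in the application to $\phi(h)$ the needed facts $\phi(0) \le 0$ and $\lim_{h\to\infty}\phi(h) = +\infty$ were already recorded in Section \ref{sec: properties depth SWE}. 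I would note this and simply assume the existence of the root (or add the missing limit hypothesis), after which the rest of the argument is the clean induction sketched above.
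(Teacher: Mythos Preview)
Your argument is correct and follows essentially the same route as the paper: induction to keep the iterates in $[x^{[0]}, x_*)$ and increasing, monotone convergence, then passing to the limit in the iteration to identify the limit as the root. The paper obtains the upper bound $x^{[k+1]} < x_*$ by observing $\psi'(x^{[k]}) \ge \psi'(x_*)$ (implicitly via the mean value theorem), while you spell out the equivalent tangent-line-above-graph formulation of concavity; these are the same idea. Your observation about existence is well taken: the paper's one-line claim that a root exists from $\lim_{x\to 0}\psi(x)<0$ and $\psi'>0$ alone is indeed insufficient (your counterexample $-e^{-x}$ shows this), and the missing hypothesis $\lim_{x\to\infty}\psi(x)=+\infty$ is supplied for $\phi$ only in the surrounding text.
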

\begin{proof}
    The existence of a unique real root follows from $\lim_{x\to 0}\psi(x)<0$ and
    $\psi'(x)>0$.

    To prove the next property, assume by induction that $x^{[j]}\in[x_0,x_*)$
    for $j=0,\dots,k$.  Then the properties of $\psi$ guarantee that $\psi(x^{[k]})<0$
    and $\psi'(x^{[k]}) \ge \psi'(x_*)>0$.  This implies that $x^{[k]} < x^{[k+1]} < x_*$.

    Since the sequence of iterates is strictly increasing and bounded from above,
    it converges to some value  $\hat{x}$ and is a Cauchy sequence, i.e.
    \begin{align} \label{eq:cauchy}
        \lim_{k\to\infty} (x^{[k+1]}-x^{[k]}) = 0.
    \end{align}
    Since $\psi'$ is continuous, it is bounded in $[x_0,x_*]$  By this, the continuity
    of $\phi$, \eqref{eq:cauchy} and \eqref{eq:newton}, we have
    $$
    0 = \lim_{k\to\infty} \psi'(x^{[k]})(x^{[k+1]}-x^{[k]}) = \lim_{k\to\infty}\psi(x^{[k]}) = \psi(\hat{x}).
    $$
    But since $\psi$ has a unique real root, we must have $\hat{x}=x_*$.
\end{proof}

\begin{corollary}\label{cor:newton}
    Let left and right states $\bfq_\ell, \bfq_r$ be given and consider the application
    of Newton's method \eqref{eq:newton} to the depth function \eqref{eq:phi SWEs}.  If the initial
    guess satisfies $0\le h^{[0]}\le h_*$, then the iteration converges to $h_*$.
\end{corollary}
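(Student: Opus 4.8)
The plan is to obtain the corollary as a direct application of Lemma~\ref{lem:newton}, with $\psi=\phi$ the depth function \eqref{eq:phi SWEs} viewed on the domain $(0,\infty)$, $x_*=h_*$, and $x^{[0]}=\ho$. The work therefore reduces to verifying that $\phi$ satisfies the three structural hypotheses of that lemma: it is twice continuously differentiable on $(0,\infty)$, it has a strictly negative limit as $h\to 0^+$, and it obeys $\phi'(h)>0$ and $\phi''(h)<0$ throughout. The last two are precisely the monotonicity and concavity statements already recorded in Section~\ref{sec: properties depth SWE}, so only the first two items need comment.

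For the regularity, the only candidate points of non-smoothness of $\phi$ are $h=h_\ell$ and $h=h_r$, where the definition \eqref{eq:f SWEs} of $f(\cdot;h_\lr)$ switches between its rarefaction branch $f_R$ and its shock branch $f_S$. At $h=h_\lr$ one has $f_R(h_\lr;h_\lr)=f_S(h_\lr;h_\lr)=0$, and substituting $h=h_\lr$ into the closed-form expressions for $f'$ and $f''$ listed in Section~\ref{sec: properties depth SWE} shows that the two branches share the same first and second derivatives there (they differ only at third order). Hence each of $f(\cdot;h_\ell)$ and $f(\cdot;h_r)$, and therefore $\phi$, belongs to $C^2(0,\infty)$, which matches the hypothesis of Lemma~\ref{lem:newton}.

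For the behaviour near the origin, $\phi$ extends continuously to $h=0$ through the rarefaction branch (legitimate since $0<\hmin$), with value $\phi(0)=-2\sqrt{g h_\ell}-2\sqrt{g h_r}+u_r-u_\ell$. Equality $\phi(0)=0$ is exactly the threshold of the depth-positivity condition that signals a dry middle state; since throughout this work we restrict to Riemann data for which no dry state arises (in particular $h_*>0$), we in fact have the strict inequality $\lim_{h\to 0^+}\phi(h)=\phi(0)<0$, rather than merely the non-strict bound stated in general. With all hypotheses in place and $\ho\le h_*$ by assumption, Lemma~\ref{lem:newton} yields $\hk\to h_*$, which is the claim. (The substantive range is $0<\ho\le h_*$: the iteration \eqref{eq:newton} is not even defined at $\ho=0$ because $\phi'$ is unbounded there, and all of the concrete initial guesses of Section~\ref{subsec: IG} are strictly positive, so this causes no loss.)

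I do not expect a serious obstacle: the corollary is essentially a repackaging of Lemma~\ref{lem:newton}. The only points requiring care are the two bookkeeping ones above --- confirming that the piecewise-defined $\phi$ is genuinely $C^2$ across the breakpoints $h_\ell$ and $h_r$, and promoting $\phi(0)\le 0$ to the strict $\phi(0)<0$ using the standing no-dry-state assumption.
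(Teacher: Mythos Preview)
Your proposal is correct and follows the same approach as the paper: invoke Lemma~\ref{lem:newton} after noting that the depth function satisfies its hypotheses. The paper's own proof is much terser---just two sentences, dispatching the degenerate case $h_*=0$ (which forces $\ho=0=h_*$) and then citing the lemma---whereas you expand on the bookkeeping (the $C^2$ match of the rarefaction and shock branches at $h=h_\lr$, and promoting $\phi(0)\le 0$ to strict inequality under the no-dry-state assumption); this extra care is sound but not a different route.
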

\begin{proof}
    If $h_*=0$, then $h^{[0]}=h_*$.  Otherwise, we can apply Lemma \ref{lem:newton}.
\end{proof}

In order to define an initial guess that satisfies Corollary \ref{cor:newton}, we replace our initial
guess $\ho$ with (see \cite{guermond2016fast}, where the analogous quantity is used for the Euler
equations)
\begin{equation}
\label{eq:IG correction Newton}
\hot := \max \left(h_{\min}, \ho -\phi\left(\ho \right) / \phi^{\prime}\left(\ho \right)\right).
\end{equation}
\revX{
Indeed, since $\phi$ is concave and strictly increasing, we have
\begin{gather}
    0=\phi(h_*) \leq \phi(\ho) + \phi'(\ho)(h_*-\ho),\\
    \ho -\phi\left(\ho \right) / \phi^{\prime}\left(\ho \right) \leq h_*.
\end{gather}
Moreover, if $h_{\min}\leq h_*$, then $\hot\leq h_*$. 
Thus, by Lemma \ref{lem:newton}, Newton's method
not only is guaranteed to converge, but it also guarantees positivity of all $h^{[k]}$.
}
For this reason, we refer to the algorithm
given by \eqref{eq:IG correction Newton} combined with \eqref{eq:newton} as \emph{positive Newton}.

\subsubsection{Two-step Newton method} \label{sec:modified-newton}
This method, introduced by  McDougall \& Wotherspoon \cite{mcdougall2014simple}, consists of a two-step
predictor-corrector iteration that yields a convergence rate of $1+\sqrt{2}$,
with the same number of function evaluations per iteration as the Newton-Raphson method
presented above. Its first iteration is given by
\begin{equation}
\begin{aligned}
&x^{[\frac12]}=x^{[0]}, \qquad
&x^{[1]}=x^{[0]}-\frac{\psi\left(x^{[0]}\right)}{\psi^{\prime}\left(\frac{1}{2}\left[x^{[0]}+x^{[\frac12]}\right]\right)}=x^{[0]}-\frac{\psi\left(x^{[0]}\right)}{\psi^{\prime}\left(x^{[0]}\right)},
\end{aligned}
\end{equation}
while for $k>1$ we have
\begin{equation}
\begin{aligned}
&x^{[k+\frac12]}=x^{[k]}-\frac{\psi\left(x^{[k]}\right)}{\psi^{\prime}\left(\frac{1}{2}\left[x^{[k-1]}+x^{[k-\frac12]}\right]\right)}, \\
&x^{[k+1]}=x^{[k]}-\frac{\psi\left(x^{[k]}\right)}{\psi^{\prime}\left(\frac{1}{2}\left[x^{[k]}+x^{[k+\frac12]}\right]\right)}.
\end{aligned}
\end{equation}

\subsubsection{Ostrowski's method}
Ostrowski's method is cubically convergent and given by the two-step iteration \revX{\cite{petkovic2014multipoint}}
\begin{subequations} \label{eq: Ostrowski step}
\begin{align}
x^{[k+\frac12]}&=x^{[k]}-\frac{\psi(x^{[k]})}{\psi'(x^{[k]})},\\
x^{[k+1]}&= x^{[k+\frac12]}-\frac{\psi\left(x^{[k+\frac12]}\right)}{\psi'(x^{[k]})}\cdot \frac{\psi(x^{[k]})}{\psi(x^{[k]})-2\psi\left(x^{[k+\frac12]}\right)}.
\end{align}
\end{subequations}


Although higher-order generalizations of Ostrowski's method have been proposed, the original version \eqref{eq: Ostrowski step} has proved to be the most efficient in several contexts. See, e.g \cite{petkovic2014multipoint}, \cite{king1973family}.

\subsubsection{Ostrowski-Newton method}
An efficient combination of Ostrowki's and Newton's methods can be obtained
by performing a single Ostrowski iteration and then proceeding with Newton
iterations (if the convergence criterion is not satisfied already).
The Newton iteration is thus initialized using
$\hot$ as defined in \eqref{eq:IG correction Newton}, but where $\ho$ is
the result of one iteration of Ostrowski's method.

\revY{
        \subsubsection{Bounding quadratic polynomials} \label{subsubsec: Quadratic polynomials}
}  
This iterative method is based on an algorithm proposed by Guermond \& Popov \cite{guermond2016fast} in the context of the Euler equations, but is adapted in a straightforward way to the
shallow water equations.
Assuming we are provided two estimates of $x_*$, $\left(x_{-}^{[0]}, x_{+}^{[0]}\right)$, which are also (respectively) lower and upper bounds, we consider  the Hermite polynomials  
\begin{subequations}
\begin{align}
  P_-(x) &= \psi(\xk_-)+\psi[\xk_-,\xk_-](x-\xk_-)+\psi[\xk_-,\xk_-,\xk_+](x-\xk_-)^2, \label{eq: quadratic polynomial below}\\
  P_+(x) &= \psi(\xk_+)+\psi[\xk_+,\xk_+](x-\xk_+)+\psi[\xk_-,\xk_+,\xk_+](x-\xk_+)^2.
\end{align}
\end{subequations}
Here 
\begin{align*}
  \psi[\xk_-, \xk_-] &= \psi'(\xk_-), \quad
  \psi[\xk_-, \xk_-, \xk_+] &= \frac{\psi[\xk_-,\xk_+]-\psi[\xk_-,\xk_-]}{\xk_+-\xk_-},\quad
  \psi[\xk_-, \xk_+] &= \frac{\psi(\xk_+)-\psi(\xk_-)}{\xk_+-\xk_-}
\end{align*}
are divided differences. 
It can be shown that $P_-(x) \le \psi(x) \le P_+(x)$ for $x\in[x_{-}^k,x_{+}^k]$. 
Both $P_-$ and $P_+$ interpolate $\psi(x)$ at $x_{-}^k$ and $x_{+}^k$ and both 
have one root in the interval $[x_{-}^k,x_{+}^k]$. These roots are given by 
\begin{subequations}
\begin{align}
  x^{[k+1]}_\pm &= \xk_\pm - \frac{2\psi(\xk_\pm)}{\sqrt{\psi'(\xk_\pm)^2-4\psi(\xk_\pm)\psi[\xk_-,\xk_\pm,\xk_+]}}.
\end{align}
\end{subequations}
Proceeding analogously to Lemma 4.5 of Guermond \& Popov \cite{guermond2016fast}, we see that
$x^{[k+1]}_-<x_*<x^{[k+1]}_+$. Moreover, the convergence
of both $\xk_-$ and $\xk_+$ is cubic.

As in the case of the positive Newton method, if $h_{\min}\leq h_*$, we can use $h_{-}^{[0]}:=h_{\min}$ as initial lower bound.
 Furthermore Lemma \ref{lemma:upper bound two rarefaction} ensures that  $h_{+}^{[0]}:=h_\RR$ is always an upper bound for $h_*$. 

\begin{remark}
We have designed and implemented a similar approach based on cubic polynomials and yielding quartic
convergence.  However, due to the difficulty of solving the cubic, that method was
less efficient and robust than other methods considered here, so we omit its description.
\end{remark}

\revY{
        \subsubsection{Single quadratic polynomial} \label{subsubsec: Single quadratic polynomial}

The method of bounding quadratic polynomials was originally introduced to obtain an upper bound
for the intermediate pressure in the Riemann problem for the EEs, such that a prescribed error for the maximum
wave speed is guaranteed. 
However, we are interested in approximating the middle state iteratively until certain convergence criterion, 
that relies uniquely on the current iterate, is met 
(see Section \ref{sec:termination}). 
Thus, the computation of two bounds for $h_*$, $h_{-}^{[k]}$ and $h_{+}^{[k]}$, at each step is not necessary.
Instead, we can use a single quadratic polynomial to approximate $\phi$ from below and whose root, $h_{+}^{[k]}$,
approximates $h_*$ from above.
This polynomial is constructed as in \eqref{eq: quadratic polynomial below}, but the 
lower bound is fixed as $h_{-}^{[k]}=h_{-}^{[0]}$ .

\subsubsection{Single linear polynomial}
\label{subsubsec: Single linear polynomial}
A further simplification of the previous method is to use a single linear polynomial to approximate $\phi$.
This polynomial interpolates $\phi$ at the lower and upper bounds of $h_*$, $h_{-}^{[0]}$ and $h_{+}^{[k]}$;
moreover, its root,
\begin{align}
h^{[k+1]}_{+} = \frac{\phi(h_{+}^{[k]})h_{-}^{[0]}-\phi(h_{-}^{[0]})h_{+}^{[k]}}{\phi(h_{+}^{[k]})-\phi(h_{-}^{[0]})},
\end{align}
is an approximation of $h_*$ from above by the concavity of $\phi$.
}


\subsection{Termination criteria} \label{sec:termination}
An appropriate convergence criterion must be established in order to properly
assess and compare the efficiency of the iterative solvers.
One standard approach is to terminate if the residual is below a specified
tolerance:
\begin{align} \label{residual}
|\phi(h^{[k]})|<\varepsilon_{r}.
\end{align}
This can be checked at no extra cost for the methods considered here, since
the depth function must be evaluated at each step for the next approximation.
Note that condition \eqref{residual} implies the rough bound
\begin{align}\label{eq:best bound residual}
|\hk-h_*| \lessapprox \frac{\varepsilon_r}{|\phi'(h_*)|},
\end{align}
which is practically the best that one can hope for based on the conditioning
of the problem.  In particular, in floating-point arithmetic, the best
possible solution is accurate to approximately $\varepsilon_m/\phi'(h_*)$, where
$\varepsilon_m$ is the unit roundoff.  
On the other hand, \eqref{eq:best bound residual} implies that the requirement \eqref{residual}
 can be too difficult to satisfy when $|\phi'(h_*)|\gg 1$, as discussed by Quarteroni et al. \cite[Ch. 6.5]{quarteroni2010numerical}.

Another widely used convergence criterion is based on stagnation, i.e.,
stopping when the relative change in the solution from one iteration to the
next is below a specified tolerance, as is done by Toro \cite{toro2001shock}:
\begin{equation} \label{stagnation}
\frac{|h^{[k]}-h^{[k-1]}|}{\frac12 |h^{[k]}+h^{[k-1]}|}<\varepsilon_s.
\end{equation}
From the estimate \cite{quarteroni2010numerical} 
\begin{align}
|h^{[k-1]}-h_*|\approx \left|\frac{\hk-h^{[k-1]}}{1-\phi'(h_*)}\right|,
\end{align}
we obtain that the bound \eqref{stagnation} implies
\begin{align}
|h^{[k-1]}-h_*| \lessapprox
\frac12 \left| h^{[k]}+h^{[k-1]}\right|  \frac{\varepsilon_s}{|1-\phi'(h_*)|}.
\end{align}
It is clear that when $|\phi'(h_*)-1|\ll 1$ or $\gg 1$, the stagnation criterion
leads to the same difficulties faced with the residual criterion. 
To obtain a bound on the error of $h^{[k-1]}$, the computation of an
extra iterate is required if we desire to use stagnation.
For this reason, the results presented in this work were obtained with the residual
convergence criterion \eqref{residual}.
There exist modifications of the residual criterion, like \cite{kelley1995iterative} 
 \begin{align} \label{eq:modified_residual}
|\phi(h^{[k]})|<|\phi(\ho)|\varepsilon_{r1}+\varepsilon_{r2},
\end{align}
that ensure (for a fixed initial guess strategy) that the method be invariant under scaling of the problem.
Since we want to compare the influence of different initial guesses in the performance
of the iterative solvers, we prefer to use \eqref{residual} in order to avoid a bias that
would favor an inaccurate initial guess.
 
\revY{
\begin{remark}
  \label{rem:abs_error}
The bracketing property of the bounding quadratic polynomials from Section \ref{subsubsec: Quadratic polynomials}
, $h_{-}^{[k]}\leq h_* \leq h_{+}^{[k]}$, allows the use of an absolute error measure as a termination criterion,
$
h^{[k]}_{+}-h_*\leq h^{[k]}_{+}-h^{[k]}_{-} < \varepsilon_a
$,
instead of a relative one.
This is particularly useful when $|\phi'(h_*)|\gg 1$ and the residual termination criterion becomes hard to satisfy.
However, out of all the methods considered in this work, the absolute error criterion is 
only suitable for the method of bounding quadratic polynomials.
\end{remark}
}

 
    \subsection{Performance comparison}
    \label{sec:Results SWEs}
 
 In this section we study how quickly each method reaches a desired solution
 accuracy, in wall clock time. In
 order to do so, we run each method on a fixed set of $10^7$  Riemann
 problems generated pseudo-randomly\footnote{using the Mersenne Twister algorithm
 provided by the \textit{random} library in Python 3.9.7.}. Following the approach
 used by Gottlieb \& Groth \cite{gottlieb1988assessment}, 20\% of the problems
 involve strong waves and 80\% involve only weak waves. For the strong wave problems, the initial depth
 on each side is taken as $h=10^k$ with $k$ distributed uniformly in $[-4,4]$ \revX{$\left(k\sim \mathcal{U}([-4,4])\right)$};
the velocities
 are taken as $u_{\ell}= 10^k$ and $u_r=-10^k$, with \revX{$k\sim \mathcal{U}([-2,2])$}. 
 For the weak waves the depth is taken as \revX{$h_\ell, h_r \sim \mathcal{U}([0.1,1])$}, and velocities are
 set to zero.  The rationale behind this approach is that in numerical applications most Riemann
 problems that arise will involve relatively nearby states.

 The Riemann solvers
 were implemented in Fortran 90 , the sets of Riemann problems were provided
 to the modules compiled with f2py 1.22.3 and GNU Fortran compiler 11.2.0. 
 Similar tests were performed using Intel's Fortran compiler and, noting that the proportion of execution times between different methods was preserved,
 for the sake of reproducibility we present the results obtained with the GNU compiler.
 
\subsubsection{Comparison of initial guesses}
To evaluate the various initial guesses on an equal footing, we test each of them as a starting
point for the positive Newton method.
As a benchmark for accuracy we use the average relative initial error  (ARIE) over the set of problems,
defined as $\frac{1}{10^7}\sum_{i=1}^{10^7}|{h_*}_i-{\ho}_i|/|{h_*}_i|$.

In our implementation, we always check for a two-rarefaction solution and
use the exact solution \eqref{eq:IG RR} in that case.
This step ensures that $h_{\min} \le h_*$ which is
crucial to avoid negative depth with the positive Newton, Ostrowski-Newton, and bounding polynomials methods.
For the initial guesses that make use of the lower bound, we can obtain improved accuracy
by using $h_{\max}$ if $\phi(h_{max})<0$.
It can be shown that $h_{\max}$ is a lower bound for $h_*$ in this case because
of the monotonicity of the depth function and the
entropy conditions for the shallow water equations; the same estimate is used in
\cite{guermond2016fast} for the Euler equations.
Since the two-rarefaction solution can be checked for \emph{a priori} and written 
explicitly, we
study only problems for which the solution involves at least one shock.

The results are presented in Table \ref{tab:time newton solver with all IG SWE}.
In terms of speed, results for all methods are within about 20\% of each other.
We see that the initial guess achieving the fewest average iterations is the
CC initial guess \eqref{eq:IG CC}, followed by SS \eqref{eq:IG SS SWEs}. 
The AV initial guess gives a slightly faster run-time because it is
cheaper to initialize and because the strong wave problems (for which it yields
a large initial error and hence requires more iterations) compose only a small fraction of the test set.

The two most accurate initial guesses are CC \eqref{eq:IG CC} for problems with weak waves 
and QA \eqref{eq:IG QA} for problems with strong waves.
Both of these have slower than average overall run-time due to 
requiring three depth function evaluations in the worst case.
Depending on the desired accuracy, this suggests the possibility of using an
approximate state Riemann solver that may not need to be refined through iteration,
as is done in, e.g., \cite[Chap. 10.3]{toro2001shock}, \cite[Chap. 9]{toro2013riemann}.
An adaptive noniterative Riemann solver using the CC and QA
initial guesses is described in Appendix \ref{sec: Approximate state RSs}.

The SS approximation is nearly as efficient as the fastest methods and is
nearly as accurate as the most accurate initial guesses (CC and QA).

\begin{table}
\centering
\begin{tabular}{|l|c|c|cc|}
\hline
\multirow{2}{*}{Initial guess}                 & \multirow{2}{*}{Time} & \multirow{2}{*}{\begin{tabular}[c]{@{}c@{}}Average\\ iterations\end{tabular}} & \multicolumn{2}{l|}{Average relative initial error} \\ \cline{4-5} 
                                               &                       &                                                                               & \multicolumn{1}{c|}{Weak waves}        & Strong waves        \\ \hline
AV (average) \eqref{eq:IG AV}                  & 0.900 s               & 3.0                                                                           & \multicolumn{1}{c|}{$4.75 \%$}         & $3500\%$            \\ \hline
RR (two-rarefaction) \eqref{eq:IG RR}          & 0.906 s               & 2.5                                                                           & \multicolumn{1}{c|}{$0.6 \%$}          & $4200\%$            \\ \hline
QA (quadratic approximation) \eqref{eq:IG QA}  & 1.013 s               & 2.3                                                                           & \multicolumn{1}{c|}{$0.6\%$}           & $1.18\%$            \\ \hline
CC (convex combination) \eqref{eq:IG CC}       & 1.042 s               & 2.0                                                                           & \multicolumn{1}{c|}{$0.08\%$}          & $4.95\%$            \\ \hline
PV (primitive-variables) \eqref{eq:IG PV SWEs} & 0.959 s               & 3.0                                                                           & \multicolumn{1}{c|}{$4.75\%$}          & $4300\%$            \\ \hline
SS (two-shock) \eqref{eq:IG SS SWEs}           & 0.906 s               & 2.2                                                                           & \multicolumn{1}{c|}{$0.12\%$}          & $10.95\%$           \\ \hline
HLLE \eqref{eq:IG HLLE}                        & 1.093 s               & 3.1                                                                           & \multicolumn{1}{c|}{$7.71\%$}          & $4100\%$            \\ \hline
\end{tabular}
\caption{Comparison of execution time, average iterations, and average relative initial error
(before iterating) obtained using
the positive Newton iterative solver for
the shallow water equations with different initial guesses.
\label{tab:time newton solver with all IG SWE}}
\end{table}

\subsubsection{Comparison of root-finding algorithms}
We now proceed to compare the various iterative algorithms.  
Where possible, we use the two-shock approximation initial guess, since it gave the best 
trade-off between initial accuracy and overall execution time in the previous comparison.
We consider two different values for the tolerance: $10^{-6}$ and $10^{-12}$.
The results, presented in Table \ref{tab:time iterative solvers with fastest IG SWEs},
show that all methods\revY{,
 except
 the ones based on interpolating quadratic and linear polynomials (Sections 
 \ref{subsubsec: Quadratic polynomials} - \ref{subsubsec: Single linear polynomial}), }
fall within about
10\% of each other.  The Ostrowski-Newton method is the fastest for both tolerances.
We can attribute this to the fact that Ostrowski's method attains the
convergence criteria in the first half-step for most of the problems, as
reflected in the number of iterations,
while the positive Newton method avoids nonphysical iterates as described in Section \ref{sec:newton}.
We also note that the two-step
Newton method of Section \ref{sec:modified-newton} is slightly faster than
the positive Newton method of Section \ref{sec:newton}.
However, one might still prefer the latter due to its outstanding robustness and provable convergence.

For Riemann problems with extremely large and/or small initial states, two additional
challenges arise.  For the solvers in which positivity is not guaranteed, the depth may
become negative.  For all solvers, it may be impossible to reach a prescribed small tolerance
for the residual because $|\phi'(\hk)|\revY{\gg} 1$, as discussed in Section \ref{sec:termination}.
\revY{
  This can be alleviated in the case of the bounding quadratic polynomials by terminating the iterative
  process when either the residual-based or the absolute error (see Remark \ref{rem:abs_error}) criteria  are satisfied.
}
The positive Newton's and Ostrowski-Newton's methods are remarkably less prone than other solvers
to stagnate before reaching the prescribed tolerance.
Since a very low number of iterations is required for typical Riemann problems, one way to deal with this
issue is to fix the maximum number of
iterations \emph{a priori} as suggested by Toro \cite{toro2013riemann}.


\begin{table}
\centering
\begin{tabular}{ |l|*{4}{c|}  } 
\hline
 \diagbox[dir=NW]{Method}{Tolerance}&\multicolumn{2}{|c|}{ $10^{-6}$}&\multicolumn{2}{c|}{$10^{-12}$}\\\hline
										 	          & Time & Iterations  & Time & Iterations    \\ \hline
Positive Newton-SS 								& 0.732 s & 1.4      & 0.874 s  & 2.2 	   \\\hline
Ostrowski-SS        							& 0.724 s & 1.0      & 0.860 s  & 1.3 				   \\\hline
Ostrowski-Newton-SS          			& 0.668 s & 1.1      & 0.780 s  & 1.4 	   \\\hline
Two-step Newton-SS      					& 0.710 s & 1.4      & 0.867 s  & 2.2 		   \\\hline
Bounding quadratic polynomials-RR & 1.252 s & 2.1      & 1.672 s  & 2.6 				   \\\hline
Single quadratic polynomial-RR    & 0.918 s & 2.1      & 1.237 s  & 2.6 				   \\\hline
Single linear polynomial-RR       & 0.805 s & 2.3      & 1.154 s  & 3.3 				   \\\hline
\end{tabular}
\caption{Comparison of execution time for different iterative Riemann Solvers for shallow water equations with their best-performing initial guess\label{tab:time iterative solvers with fastest IG SWEs}}
\end{table}

%


\section{The Euler equations}\label{sec:Euler}

    \subsection{Exact solution of the Riemann problem}

Just as for the shallow water equations, the Riemann problem for the Euler
equations of an ideal compressible gas in 2D or 3D reduces essentially
to the 1D problem if we consider a planar, coordinate-aligned Riemann problem;
see e.g. \cite[Section 18.8.1]{levequefvmbook}.  In that case we have

%
%
%

%

    \begin{subequations}\label{eq:euler}
\begin{align}
\partial_x\mathbf{q}+\partial_x\mathbf{f}(\mathbf{q})=\mathbf{0}, \\
\mathbf{q}(x, 0)=\left\{\begin{array}{ll}
\mathbf{q}_{\ell} & \text { if } x<0, \\
\mathbf{q}_{r} & \text { if } x>0,
\end{array}\right.
\end{align}
\end{subequations}

with 

\begin{equation} \label{eq:1d euler RP}
\mathbf{q}=\left[\begin{array}{c}
\rho \\
\rho u \\
E
\end{array}\right], \quad \mathbf{f}=\left[\begin{array}{c}
\rho u \\
\rho u^{2}+p \\
u(E+p)
\end{array}\right],
\end{equation}
and 
\begin{equation}
E=\rho\left(\frac{1}{2} u^{2}+e\right), \qquad e=\frac{p}{(\gamma-1)\rho},
\end{equation}
\revX{where the heat capacity ratio $\gamma$ is assumed to be constant. }

To describe the exact solution for \eqref{eq:euler} it is useful to use the
primitive variables $\bfw=(\rho,u,p)^T $. The solution consists of three
waves, each one associated with a characteristic speed of the system, given by
the eigenvalues of the flux Jacobian
$\mathrm{A}(\bfq)=\mathbf{f}'(\mathbf{q})$:
\begin{equation}
\lambda_1=u-a, \qquad \lambda_2=u, \qquad\lambda_3=u+a, 
\end{equation}
where $a=\sqrt{\frac{\gamma p}{\rho}}$ is the sound speed. The 1- and 3- waves
are genuinely nonlinear, and can be either shocks or rarefactions, while the 2-
wave is linearly degenerate and thus a contact discontinuity. They separate
four constant states $\mathbf{w}_\ell, \mathbf{w}_{*\ell}, \mathbf{w}_{*r},
\mathbf{w}_r$, where $\bfw_\ell$ and $\bfw_r$ are given by the initial data and
the intermediate states are given by $\bfw_{*\ell}=(\rho_{*\ell},u_*,p_*)$ and
$\bfw_{*r}=(\rho_{*r},u_*,p_*)$. Thus the unknown quantities
are $p_*,u_*,\rho_{*\ell},\rho_{*r}$. If $p_*<p_\ell$ the 1-wave is a
rarefaction and connects $\bfw_\ell$ and $\bfw_{*\ell}$ through the isentropic
relation and the generalised Riemann invariants. If $p>p_\ell$ the 1-wave is a
shock, so $\bfw_\ell$ $\bfw_{*\ell}$ are related by the
Rankine-Hugoniot conditions. Analogous statements hold for the 3-wave. Taking all of
this into account we obtain that $p_*$ is the root of the pressure function

\begin{equation}
\label{eq:pressure function}
\phi\left(p; \bfw_{\mathrm{\ell}}, \bfw_{\mathrm{r}}\right) = f\left(p; \bfw_{\mathrm{\ell}}\right)+f\left(p; \bfw_{\mathrm{r}}\right)+ u_{\mathrm{r}}-u_{\mathrm{\ell}},
\end{equation}

where $f$ is given by
\begin{equation}
f\left(p; \bfw_\lr\right)=\left\{\begin{array}{ll}
\left(p-p_\lr\right)\left[\frac{C_\lr}{p+B_\lr}\right]^{\frac{1}{2}} & \text { if } p>p_\lr(\text { shock }), \\
\frac{2 C_\lr}{(\gamma-1)}\left[\left(\frac{p}{p_\lr}\right)^{\frac{\gamma-1}{2 \gamma}}-1\right] & \text { if } p \leq p_\lr(\text { rarefaction }),
\end{array}\right.
\end{equation}

and $C_\lr, B_\lr$ are given by
\begin{equation}
\begin{array}{l}
C_\lr=\frac{2}{(\gamma+1) \rho_\lr}, \quad B_\lr=\frac{(\gamma-1)}{(\gamma+1)} p_\lr
\end{array}.
\end{equation}

Let us assume that we already have found $p_*$, and therefore the wave structure for our problem. Then, we have closed-form solutions for $u_*$, $\rho_{*\ell}$, and $\rho_{*r}$.  First, we compute the intermediate velocity
\begin{equation}
u_{*}=\frac{1}{2}\left(u_{\mathrm{\ell}}+u_{\mathrm{r}}\right)+\frac{1}{2}\left[f\left(p_*; \bfw_{\mathrm{\ell}}\right)+f\left(p_*; \bfw_{\mathrm{r}}\right)\right].
\end{equation}

The values of $\rho_{*\ell}$ and $\rho_{*r}$ will be also determined by the nature of the 1- and 3- waves. If the 1-wave or the 3-wave are shocks, then
\begin{equation}\label{eq: density if a wave shock}
\rho_{*\ell}=\rho_{\ell}\left[\frac{\frac{p_{*}}{p_{\mathrm{L}}}+\frac{\gamma-1}{\gamma+1}}{\frac{\gamma-1}{\gamma+1} \frac{p_{*}}{p_{\ell}}+1}\right]\qquad 
\text{or}\qquad 
\rho_{*r}=\rho_{\mathrm{r}}\left[\frac{\frac{p_{*}}{p_{\mathrm{r}}}+\frac{\gamma-1}{\gamma+1}}{\frac{\gamma-1}{\gamma+1} \frac{p_{*}}{p_{\mathrm{r}}}+1}\right],
\end{equation}
and if the 1-wave or the 3-wave are rarefactions, then
\begin{equation} \label{eq: density if a wave rarefaction}
\rho_{*\ell}=\rho_{\mathrm{L}}\left(\frac{p_{*}}{p_{\mathrm{L}}}\right)^{\frac{1}{\gamma}}\qquad 
\text{and} \qquad
\rho_{*r}=\rho_{\mathrm{R}}\left(\frac{p_{*}}{p_{\mathrm{R}}}\right)^{\frac{1}{\gamma}},
\end{equation}
respectively.

For an in-depth derivation of the Euler equations and its solution strategy, we refer the reader to, e.g. \cite{toro2013riemann,levequefvmbook}.
An excellent review of exact solver algorithms can be found in the work of Gottlieb \& Groth \cite{gottlieb1988assessment}.

Since the solution of the Riemann problem \eqref{eq:euler} can be explicitly computed once $p_*$ is computed,
 our goal is to determine the root of the pressure function \eqref{eq:pressure function} in a computationally efficient way.
We remark that in the presence of a vacuum state, i.e. $p_\ell$, $p_r$, or $p_*$ equal to zero,
the complete exact solution for the Riemann problem can be obtained explicitly,
and therefore we do not consider problems with this behavior here. 
\revY{However, for practical purposes, an implementation of the vacuum-state treatment presented in \cite[Sec. 4.6]{toro2013riemann}
is available in the reproducibility repository \cite{repository}.}

\subsubsection{Properties of the pressure function}
The following general properties of the pressure function, which behaves similarly to the depth function for the
SWEs, are useful in designing algorithms for its solution.  We have
\begin{equation}
f'\left(p; \bfw_\lr\right)=\left\{\begin{array}{ll}
\left(\frac{C_{\lr}}{B_{\lr}+p}\right)^{1 / 2}\left[1-\frac{p-p_{\lr}}{2\left(B_{\lr}+p\right)}\right] & \text { if } p>p_{\lr} \text { (shock), } \\
\frac{1}{\rho_{\lr} a_{\lr}}\left(\frac{p}{p_{\lr}}\right)^{-(\gamma+1) / 2 \gamma} & \text { if } p \leq p_{\lr} \text { (rarefaction), }
\end{array}\right.
\end{equation}

and
\begin{equation}
f''\left(p; \bfw_\lr\right)=\left\{\begin{array}{ll}
-\frac{1}{4}\left(\frac{C_{\lr}}{B_{\lr}+p}\right)^{1 / 2}\left[\frac{4 B_{\lr}+3 p+p_{\lr}}{\left(B_{\lr}+p\right)^{2}}\right] & \text { if } p>p_{\lr} \text { (shock), } \\
-\frac{(\gamma+1) a_{\lr}}{2 \gamma^{2} p_{\lr}^{2}}\left(\frac{p}{p_{\lr}}\right)^{-(3 \gamma+1) / 2 \gamma} & \text { if } p \leq p_{\lr} \text { (rarefaction). }
\end{array}\right.
\end{equation}

It is straightforward to see that   $\phi^{\prime}(p)=f'(p,\bfw_\ell)+f^{\prime}(p,\bfw_r)>0$ and  $\phi^{\prime \prime}(p)=f^{\prime \prime}(p,\bfw_\ell)+f^{\prime \prime}(p,\bfw_r)<0$, therefore $\phi$ is monotonically increasing and concave down. Therefore, it follows that $\phi\left(p_{\min}\right)>0$ implies that both waves are rarefactions, and
$\phi\left(p_{\max}\right)<0$ implies that both waves are shocks, where $ p_{\min} =\min \left(p_{\ell}, p_{r}\right)$ and $p_{\max} =\max \left(p_{\ell}, p_{r}\right)$.

\subsection{Computation of the initial guess}
    
 In order to find the root of $\phi$ exactly we follow an iterative process. We will proceed as in Section \ref{subsec: IG}.
 We consider a total of six initial guesses, three of which we describe in the following sections.
 The other three (AV, CC, and HLLE) are the natural analogs of the SW initial guesses described in Sections
 \ref{IG: AV}, \ref{IG: CC} and \ref{IG:HLLE}.
 
\subsubsection{Two-rarefaction approximation (RR)}
If the 1- and 3- waves are rarefactions, then
\begin{equation}
\phi(p;\bfw_\ell,\bfw_r) = \phi_\RR(p) := \frac{2C_\ell}{(\gamma-1)}\left[\left(\frac{p}{p_\ell}\right)^{\frac{\gamma-1}{2\gamma}}-1\right]+\frac{2C_r}{(\gamma-1)}\left[\left(\frac{p}{p_r}\right)^{\frac{\gamma-1}{2\gamma}}-1\right],
\end{equation}
and its root is given explicitly by
\begin{equation} \label{eq:two rarefactions euler}
p_{\mathrm{RR}} := \left[\frac{a_{\mathrm{\ell}}+a_{\mathrm{r}}-\frac{1}{2}(\gamma-1)\left(u_{\mathrm{r}}-u_{\mathrm{\ell}}\right)}{a_{\mathrm{\ell}} / p_{\mathrm{\ell}}^{\frac{\gamma-1}{2 \gamma}}+a_{\mathrm{r}} / p_{\mathrm{r}}^{\frac{\gamma-1}{2 \gamma}}}\right]^{\frac{2 \gamma}{\gamma-1}}.
\end{equation}
For problems whose solution does not include two rarefactions, this value can be used as an initial guess.

\subsubsection{Primitive-variable linearized solver (PV)}
Any approximate Riemann solver can be used to provide an initial guess for
an iterative solver.  Here we consider the use of the linearized solver
introduced by Toro \cite{toro1991}.  Linearizing the system
around the state $\hat{\bfw}$ in primitive variables gives
the linearized system \eqref{eq:linearized system}.
Choosing $\hat{\rho}$ and $\hat{a}$ as the arithmetic mean between the left
and right values, we get the following solution:
$$
p_* = \frac{1}{2}(p_\ell+p_r)-\frac18(u_r-u_\ell)(\rho_\ell+\rho_r)(a_\ell+a_r).
$$
In case this value is negative, it is common practice to take $p_* = \epsilon \ll 1$.
However, since we know that (except in the case of two rarefactions)
$\min(p_\ell,p_r)$ is a positive lower bound for $p_*$, we set
\begin{equation}
\label{eq:PPV estimation Euler}
\po = p_\PV := \max\left(\min(p_\ell,p_r),\frac{1}{2}(p_\ell+p_r)-\frac18(u_r-u_\ell)(\rho_\ell+\rho_r)(a_\ell+a_r)\right).
\end{equation}

\subsubsection{Two-shock approximation (SS)} 
Next we consider another initial guess based on an approximate solver that was
proposed by Toro \cite{toro1995direct}. If both waves are
shocks, then the pressure function becomes
\begin{equation}\label{eq:two-shock euler}
\phi_\SS(p) := \left(p-p_{\ell}\right) g_{\ell}(p)+\left(p-p_{\ell}\right) g_{r}(p)+u_r-u_\ell,
\end{equation}
where
\begin{equation}
g_\lr(p)=\left[\frac{C_\lr}{p+B_\lr}\right]^{1 / 2}.
\end{equation}
This can be solved explicitly if we choose an approximation for the argument of $g_\lr$.  We take the
approximate value $p_\PV$, yielding
\begin{equation}
\phi_\SS(p)=\left(p-p_{\ell}\right) g_{\ell}(p_\PV)+\left(p-p_{\ell}\right) g_{r}(p_\PV)+u_r-u_\ell,
\end{equation}
  from which we get the initial guess
  \begin{equation} \label{SS-Euler}
  \po = p_\SS := \frac{g_{\ell}\left(p_\PV\right) p_{\ell}+g_{r}\left(p_\PV\right) p_{r}-u_r+u_\ell}{g_{\ell}\left(p_\PV\right)+g_{r}\left(p_\PV\right)}.
  \end{equation}

\subsection{Iterative algorithms}
 For the Euler equations, we will test all of the root-finding iterative
 algorithms introduced in Section \ref{subsec: Iterative algorithms},
 along with two more described below.
 These additional solvers were studied by Gottlieb \& Groth \cite{gottlieb1988assessment}
 and shown to perform better than methods introduced before that.
 Besides the algorithms discussed here, we also implemented and tested
 Pike's exact Riemann solver for the Euler equations \cite{pike1993riemann}.
 However, we were not able to reproduce the results from \cite{pike1993riemann} or to obtain
 competitive results with that solver, so we omit it here.

 \subsubsection{Gottlieb \& Groth's method (GG)}
A novel approach to compute the intermediate
state of a Riemann problem exactly was introduced by Gottlieb \& Groth \cite{gottlieb1988assessment}. It is a Newton iteration
in which the intermediate state velocity (rather than the pressure) is updated directly:
\begin{equation}
u^{[k+1]}=u^{[k]}-\frac{p_\ell^*(u^{[k]})-p_r^*(u^{[k]})}{p_\ell^{*'}(u^{[k]})-p_r^{*'}(u^{[k]})}.
\end{equation}
Here the values of $p_l^*$, $p_l^{*'}$, $p_r^*$, $p_r^{*'}$ depend on the
nature of the 1- and 3- waves given that $u^{[k]}$ is the middle state
velocity. For instance, if there is a shock (with $u^{[k]}\leq u_\lr$), 
\begin{align}
p^*_\lr(u^{[k]})&=p_\lr+C_\lr(u^{[k]}-u_\lr)W_\lr,\\
p_\lr^{*'}(u^{[k]})&=\frac{2\, C_\lr W_\lr^3}{1+W_\lr^2},\\
W_\lr&=\frac{\gamma+1}{4}\frac{u^{[k]}-u_\ell}{a_\lr}+\mp\left[1+\left[\frac{\gamma+1}{4}\frac{u^{[k]}-u_\lr}{a_\lr}\right]^2\right]^{1/2}.
\end{align}
Meanwhile, if there is a rarefaction (with $u^{[k]}>u_\lr$), we have
\begin{align}
a^*_\lr&=a_\lr+\mp\frac{\gamma-1}{2}(u^{[k]}-u_\lr),\\
p_\lr^*(u^{[k]})&=p_\lr\left(\frac{a^*_\lr}{a_\lr}\right)^{2\gamma/(\gamma-1)},\\
p^{*'}_\lr(u^{[k]})&=\mp\gamma\,p_\lr^*/a^*_\lr,
\end{align}
where for each $\mp$ we take the minus sign for $\ell$ and the plus sign for $r$.
The authors use a two-rarefaction approximation as initial guess for this method.
Since the iterations are performed on the velocity, the initial guess takes the form
\begin{equation}
u^{[0]}=\frac{(u_\ell+z_1 a_\ell)z_2+(u_r-z_1 a_r)}{1+z_2},
\end{equation}
where
\begin{equation}
z_1=\frac{2}{\gamma-1},\qquad z_2=\frac{a_r}{a_\ell}\left(\frac{p_\ell}{p_r}\right)^{\frac{\gamma-1}{2\gamma}}.
\end{equation}

For this method, we employ a different convergence criterion, following
\cite{gottlieb1988assessment}.
The iteration continues until $|(p^*_r-p^*_\ell)/p^*_r|<\epsilon$, for a
specified tolerance $\epsilon$. This method takes advantage of the fact that
the sound
speed $a$ appears more often than the density $\rho$ in computational
simulations of perfect gases. The goal is not only to compute the constant
primitive variables of the intermediate state $p^*$ and $u^*$, but also to
provide an estimate of $\rho_{*\ell}$ and $\rho_{*r}$ through $a_{*\ell}$ and
$a_{*r}$ in order to completely characterize the waves in the Riemann solution.

\subsubsection{van Leer's method (vL)}
van Leer introduced a modification of Godunov's
second formulation \cite{van1979towards,gottlieb1988assessment} to estimate
$p_*$.
This method consists of reducing the difference between two estimates
$u^*_\ell$ and $u^*_r$ for the intermediate velocity.
The functions $u^*_\lr(\cdot)$ are determined by the wave jump equations and can be represented by 
\begin{equation}
u^*_\lr(p^{[k]})=u_\lr\, \mp \frac{p^{[k]}-p_\ell}{A_\lr(p^{[k]})},
\end{equation}
and their derivatives  by
\begin{equation}
{u^*}'(p^{[k]})=
\begin{cases}
\mp(A_\lr^2+\gamma p_k\rho_k)/(2A_\lr^3)\qquad & p^{[k]}\geq p_\lr,\\
\mp (\gamma p_\lr\rho_\lr)^{-1/2}\left[\frac{p^{[k]}}{p_\lr}\right]^{-(\gamma+1)/(2\gamma)}   &p^{[k]}<p_\lr.
\end{cases}
\end{equation}
where again for each $\mp$ we take the minus sign for $\ell$ and the plus sign for $r$, and
\begin{equation}
A_\lr=\begin{cases}
(\gamma\, p_\lr \rho_\lr)^{1/2}\left[\frac{\gamma+1}{2\gamma}\frac{p^{[k]}}{p_\lr}+\frac{\gamma-1}{2\gamma} \right]^{1/2}\qquad & p^{[k]}\geq p_\lr,\\
\frac{\gamma-1}{2\gamma}(\gamma\,p_\lr\rho_\lr)^{1/2}\frac{1-p^{[k]}/p_\lr}{1-(p^{[k]}/p_\lr)^{(\gamma-1)/(2\gamma)}}  &p^{[k]}<p_\lr.
\end{cases}
\end{equation}
The pressure approximation is updated via
\begin{equation}
p^{[k+1]}=p^{[k]}-\frac{u^*_\ell(p^{[k]})-u^*_r(p^{[k]})}{u^{*'}_\ell(p^{[k]})-u^{*'}_r(p^{[k]})}.
\end{equation}
For this method, we use the convergence criterion $|(u^*_r-u^*_\ell)/u^*_r|<\epsilon$.


    \subsection{Performance comparison}
    \label{sec:results EEs}

For the performance comparison we proceed analogously to the shallow water equations. We consider $10^7$ Riemann problems generated pseudo-randomly, with $20\%$ of them involving strong waves and $80\%$ involving only weak waves.
For the problems with strong waves we take each of the initial pressures as $p_\ell, \, p_r=10^k$ with $k\sim \mathcal{U}([-4,4])$;
the particle  velocities as $u_\ell=10^k$ and $u_r=-10^k$ with $k\sim \mathcal{U}([-2,2])$;
the densities
are taken as \revX{$\rho_\ell, \rho_r \sim \mathcal{U}([0.01,0.9])$}.
For the problems with weak waves we use $p_\ell,\, p_r \sim \mathcal{U}([0.1, 1])$, $u_\ell=u_r=0 $ , and $ \rho_\ell, \rho_r\ \sim \mathcal{U}([0.1,0.9])$. 

Again, in our implementation we always check first for a two-rarefaction solution
and use the exact solution \eqref{eq:two rarefactions euler} in that case.  Therefore, in this section we
consider only Riemann problems whose solution includes one or more shocks.  This is guaranteed for the
strong wave problems since the fluid on both sides is initially flowing toward the origin.
The initial conditions for the weak wave problems were set to resemble the behavior of the Sod shock tube problem \cite{sod1978survey}, 
where the exact Riemann solution consists of a shock, a contact discontinuity, and a rarefaction.
Once the two-rarefaction scenario has been discarded, we have the bound $p_{\min} \leq p_*$ and
the modification of Newton's method \eqref{eq:IG correction Newton} can be used to avoid non-positive pressures.

The solvers of Gottlieb \& Groth and of van Leer were both originally developed
to allow for different ratios of specific heats on the left and right (see \cite{gottlieb1988assessment}, \cite{van1979towards}).
 Notice that both of these solvers compute $u_*$ in the iterative process to obtain $p_*$. Therefore, to allow a fair timing comparison, $u_*$ is computed along with $p_*$ in every solver. 
\revX{\subsubsection{Comparison of initial guesses}}
We compare the initial guesses by applying each in combination with Newton's method, as is done in Section \ref{sec:Results SWEs}.
The results are presented in Table \ref{tab:time newton solver with all IG}.
The two-rarefaction (RR) and convex combination (CC) approximations require fewer iterations on average.
However, the greater cost of the CC approximation itself outweighs the advantage obtained
from the reduced number of iterations required, yielding the slowest method overall.
Again we conclude that the two-shock approximation is the best initial guess
available for an iterative scheme (assuming that one always checks first for the
two-rarefaction case).

In Appendix \ref{sec: Approximate state RSs}
we define an adaptive noniterative Riemann solver based on the CC approximation, similar to that proposed for the SWEs.
This approximation can be used as a solver for problems with weak weaves, and as the initial
guess for an iterative algorithm for problems with strong waves.
Even though for weak wave problems this solver does not provide an exact solution,
the accuracy attained with our ensemble of test problems is
competitive (0.04\% average relative error).
Furthermore, an improvement in execution time of approximately 25\% was obtained
with respect to the fastest fully iterative scheme considered in this work.


\begin{center}
\begin{table}
\centering
\begin{tabular}{|l|c|c|cc|}
\hline
\multirow{2}{*}{Initial guess}                                      & \multirow{2}{*}{Time (s)} & \multirow{2}{*}{Iterations} & \multicolumn{2}{c|}{Average relative initial error} \\ \cline{4-5} 
                                                                    &                           &                             & \multicolumn{1}{c|}{Weak waves}    & Strong waves   \\ \hline
AV (average pressure)  \eqref{eq:IG AV}                             & 2.847                     & 3.6                         & \multicolumn{1}{c|}{$7.05 \%$}     & $51\%$         \\ \hline
RR (two-rarefaction) \eqref{eq:two rarefactions euler}              & 2.893                     & 2.5                         & \multicolumn{1}{c|}{$0.13 \%$}     & $193\%$        \\ \hline
PV (primitive variables linearized) \eqref{eq:PPV estimation Euler} & 2.832                     & 3.5                         & \multicolumn{1}{c|}{$7.05\%$}      & $33\%$         \\ \hline
SS (two-shock) \eqref{SS-Euler}                                     & 2.247                     & 2.3                         & \multicolumn{1}{c|}{$0.11\%$}      & $25\%$         \\ \hline
CC (convex combination)  \eqref{eq:IG CC}                           & 3.317                     & 2.1                         & \multicolumn{1}{c|}{$0.04\%$}      & $28\%$         \\ \hline
HLLE \eqref{eq:IG HLLE}                                             & 2.957                     & 3.3                         & \multicolumn{1}{c|}{$4.35\%$}      & $20\%$         \\ \hline
\end{tabular}
\caption{Comparison of execution time of Newton's iterative solver with different initial guesses and 
their average relative initial error the
 Euler equations with different initial guesses \label{tab:time newton solver with all IG}}
\end{table}
\end{center} 
\revX{\subsubsection{Comparison of root-finding algorithms}}
Next we assess the performance of each iterative scheme.  For most of the
schemes, we use the SS (two-shock) approximation, based on its performance
in the comparison above.  As upper
and lower bounds of $p_*$ for the bounding polynomial iterative scheme we use
$p_\RR$ and $p_{\min}$, respectively.
Given its previously-demonstrated superior performance \cite{gottlieb1988assessment}, we use the RR (two rarefaction) approximation for the flow velocity as starting guess for the GG solver.

The results for two different tolerances are shown in Table \ref{tab:time iterative solvers with fastest IG}.
For $\epsilon=10^{-6}$ we obtain again that the positive Newton and two-step
Newton methods converge faster (in wall clock time) than the other iterative
schemes.  The bounding polynomial-based iterative scheme converges in the
fewest iterations, but is drastically slower due to the cost per iteration.

For $\epsilon = 10^{-12}$, van Leer's method \revY{ and the single linear polynomial-based } method fail to converge for some problems,
either because a negative pressure is generated or a fixed maximum number of iterations is exceeded.
Also, in contrast with the results obtained for the shallow water equations, we
have that \revY{Ostrowski's and Ostrowski-Newton's methods are  faster than
positive Newton's method } at this tighter tolerance.
This suggests that as the function $\phi$ increases in computational complexity
and more accuracy is needed, higher order methods may become more efficient.
Note that Ostrowski-Newton's method preserves the robustness of the positive Newton's method with the fix \eqref{eq:IG correction Newton}.
This property relies on the way the Fortran compiler handles undefined numbers if the first Ostrowski half-step \eqref{eq: Ostrowski step} returns a non-positive approximation; therefore, care must be taken in its implementation.
\begin{center}
\begin{table}
\centering
\begin{tabular}{ |l|*{4}{c|} } 
\hline
 \diagbox[dir=NW]{Method}{Tolerance}&\multicolumn{2}{|c|}{ $\epsilon=10^{-6}$}&\multicolumn{2}{c|}{$\epsilon=10^{-12}$}\\\hline
 											 	& Time      & Iterations& Time      & Iterations    \\\hline
Positive Newton-SS 								& 1.654 s   & 1.5       & 2.258 s   & 2.3 	        \\\hline
Ostrowski-Newton-SS 					        & 1.838 s   & 1.1       & 2.003 s   & 1.4 	        \\\hline
Ostrowski-SS 					                & 1.762 s   & 1.0       & 2.061 s   & 1.3 	        \\\hline
GG-RR		    	                            & 2.860 s   & 2.4       & 3.702 s   & 3.3 	        \\\hline
Two-step Newton-SS						        & 1.670 s   & 1.5       & 2.232 s   & 2.2 		    \\\hline
van Leer-SS								        & 1.973 s   & 2.5       & -         & - 			\\\hline
Bounding quadratic polynomials-RR			    & 4.898 s 	& 1.0       & 5.541 s   & 1.2 			\\\hline
Single quadratic polynomial-RR                  & 3.203 s 	& 1.1       & 3.753 s   & 1.5 			\\\hline
Single linear polynomial-RR			            & -      	& -         & -         & - 			\\\hline

\end{tabular}
\caption{Comparison of execution time for different iterative Riemann Solvers for Euler Equations with their best performing initial guesses \label{tab:time iterative solvers with fastest IG}}
\end{table}
\end{center}


\section{Finite volume simulations}\label{sec:FV}

Riemann solvers are most widely used as a building block in numerical simulations, and we
now compare the performance of the iterative solvers studied in this work with that of
the most commonly-used approximate solvers. 
We use the wave-propagation approach implemented in Clawpack \cite{ketcheson2012pyclaw,mandli2016clawpack}.
The Riemann solvers compute the full characteristic structure of the Riemann
solution in order to obtain the value along the cell interface.
We test the solvers in combination with the first-order Godunov method and
then with the standard second-order Clawpack (Lax-Wendroff-LeVeque) approach.

We first consider a shallow water shock-wave interaction problem inspired by
the Woodward-Collela blast wave problem, with reflecting boundary conditions
at $x=\pm 5$ and the following initial data:
\begin{align}
    h(x)=\begin{cases}
        30, & \text{if }x\leq-2,\\
        1, & \text{if } -2<x<2,\\
        50, & \text{if } 2\leq x,
    \end{cases}
    \qquad
    hu(x)=0.
\end{align}
We compare the accuracy of the solution obtained with the Ostrowski-Newton iterative solver
with the Roe and HLLE approximate solvers, at final time $T=10$ on a range of mesh
sizes, as shown in Table \ref{tab: FV SWEs accuracy}. 
For this we use a self-convergence test with the solution given by each solver in a grid
of 4050 cells as exact solution, and the errors are measured using the corresponding standard grid function norms.
The execution time of each simulation is measured with the same final time and a grid
of 4050 volume cells. 
An important factor for accuracy and timing measurement is the choice of the time step,
which is chosen to yield a CFL number of approximately 0.9 at each step.
The results can be seen in Table \ref{tab: FV SWEs time}, where a grid function
norm is used to compute the relative $L^2$ error in $h$.

For the Euler equations we use the positive Newton iterative solver. We take the domain $[0,1]$, wall boundary conditions, and the initial conditions
\begin{align}
    \rho(x)=0.1,\qquad
    \rho u(x)=0, \qquad
    E(x)=\begin{cases}
        1000/(\gamma-1), & \text{if }x<0.1,\\
        1, & \text{if } 0.1\leq x\leq 0.9,\\
        100/(\gamma-1), & \text{if } 0.9< x.
    \end{cases}
\end{align}
Accuracy and execution time are measured analogously to the shallow water equations with a final time \revX{$T=0.5$}, and the errors being computed in $\rho$.
The results are shown in Tables \ref{tab:FV EEs accuracy} and \ref{tab:FV EEs time}.

We obtain similar results for both the shallow water and Euler equations. For
both problems, all three Riemann solvers give similar accuracy.
We see, on the other hand, that with Godunov's method the approximate solvers are faster than the iterative solver by a factor of 2 for the shallow water equations and for a factor of 3 for the Euler equations. 
These relative differences are somewhat smaller when a second order scheme is used.
For higher-order methods, with more costly additional operations such as WENO reconstruction, ADER schemes,
or for multiphysics simulations, the relative increase in cost will be further diminished.

\begin{table}
\begin{tabular}{cc}
    \begin{minipage}{.5\linewidth}
        \centering
        \begin{tabular}{|c|cccc|cccc|}
\hline
\multirow{2}{*}{Grid} & \multicolumn{4}{c|}{$L^2$ error for $h$ (\%)}                            \\ \cline{2-5} 
                      & \multicolumn{1}{c|}{50}    & \multicolumn{1}{c|}{150}   & \multicolumn{1}{c|}{450}   & 1350     \\ \hline
Iterative             & \multicolumn{1}{c|}{2.07} & \multicolumn{1}{c|}{1.61} & \multicolumn{1}{c|}{1.15} & 0.59 \\ \hline
HLLE                  & \multicolumn{1}{c|}{1.96} & \multicolumn{1}{c|}{1.48} & \multicolumn{1}{c|}{1.17} & 0.54 \\ \hline
Roe                   & \multicolumn{1}{c|}{1.97} & \multicolumn{1}{c|}{1.52} & \multicolumn{1}{c|}{1.19}  & 0.55 \\ \hline
\end{tabular}
           \caption{Accuracy of Godunov's method for the shallow water equations with different Riemann solvers\label{tab: FV SWEs accuracy}}
        \end{minipage} &

     \begin{minipage}{.5\linewidth}
        \centering
        \begin{tabular}{|c|cccc|cccc|}
\hline
\multirow{2}{*}{Grid} & \multicolumn{4}{c|}{$L^2$ error for $\rho$ (\%)}                                                               \\ \cline{2-5} 
                      & \multicolumn{1}{c|}{50}    & \multicolumn{1}{c|}{150}   & \multicolumn{1}{c|}{450}   & 1350  \\ \hline
Iterative             & \multicolumn{1}{c|}{42.07} & \multicolumn{1}{c|}{34.96} & \multicolumn{1}{c|}{20.24} & 9.64\\ \hline
HLLE                  & \multicolumn{1}{c|}{42.84} & \multicolumn{1}{c|}{31.86} & \multicolumn{1}{c|}{23.51} & 12.3 \\ \hline
Roe                   & \multicolumn{1}{c|}{41.8} & \multicolumn{1}{c|}{34.84} & \multicolumn{1}{c|}{20.22} & 9.65  \\ \hline
\end{tabular}
    \caption{Accuracy of Godunov's method for the Euler equations with different Riemann solvers\label{tab:FV EEs accuracy}}
    \end{minipage}
\end{tabular}
\end{table}

\begin{table}
\begin{tabular}{cc}
    \begin{minipage}{.5\linewidth}
        \centering
        \begin{tabular}{|c|cc|}
        \hline
        \multirow{2}{*}{Riemann Solver} & \multicolumn{2}{c|}{Execution time}             \\ \cline{2-3} 
                                        & \multicolumn{1}{c|}{First order} & Second order \\ \hline
        Iterative                       & \multicolumn{1}{c|}{4.4 s}       & 4.9 s        \\ \hline
        HLLE                            & \multicolumn{1}{c|}{2.3 s}       & 3.3 s        \\ \hline
        Roe                             & \multicolumn{1}{c|}{2.2 s}       & 3.3 s        \\ \hline
        \end{tabular}
       \caption{Execution time of finite volume simulations with different Riemann solvers
       for the shallow water equations\label{tab: FV SWEs time}}
        \end{minipage} &

     \begin{minipage}{.5\linewidth}
        \centering
        \begin{tabular}{|c|cc|}
        \hline
        \multirow{2}{*}{Riemann Solver} & \multicolumn{2}{c|}{Execution time}             \\ \cline{2-3} 
                                        & \multicolumn{1}{c|}{First order} & Second order \\ \hline
        Iterative                       & \multicolumn{1}{c|}{11.0 s}      & 12.3 s       \\ \hline
        HLLE                            & \multicolumn{1}{c|}{3.3 s}       & 5.0 s        \\ \hline
        Roe                             & \multicolumn{1}{c|}{3.6 s}       & 5.2 s        \\ \hline
        \end{tabular}
        \caption{Execution time of finite volume simulations with different Riemann solvers
       for the Euler equations\label{tab:FV EEs time}}
    \end{minipage}
\end{tabular}
\end{table}

\section{Conclusion}
The new positivity-ensuring modifications of iterative solvers presented here, combined with
accurate lower bounds for the depth and pressure, yield iterative solvers that are as robust
as any approximate Riemann solver and are guaranteed to give physically-admissible outputs.
Unsurprisingly, in terms of speed the best iterative methods still lag behind approximate solvers, but by a
smaller margin (about a factor of two) than what has been reported in the past.  This may be
due to relative shifts in the speed of flops compared to memory accesses over recent decades.

The new \revY{cubically}-convergent (bounding \revY{quadratic } 
polynomials) iteration succeeds in reducing the number
of iterations required for solution of the Euler Riemann problem
\revY{and can be coupled with an absolute error-based convergence criterion}, but is overall
inefficient due to the cost per iteration.
\revY{Its simplified versions, relying on the computation of a single interpolating polynomial,
are somewhat more competitive in terms of efficiency, but also less robust.
Furthermore, they are not well-suited for an absolute error-based termination criterion, 
as the lower bound is fixed from the beginning of the iterative process. }  
Among the iterative methods tested, we recommend
the positive Newton method (proposed herein) as a simple, robust, and near-best-efficiency solver.

\section*{Acknowledgements}
We thank the anonymous referees for their helpful comments,
one of which led to the introduction and testing of the single quadratic polynomial method.

\appendix

\section{Approximate state Riemann solvers}
\label{sec: Approximate state RSs}
Given the high accuracy obtained with the CC and QA initial guesses
for the SWE Riemann problem, a new non-iterative approximate state Riemann 
solver  is proposed in Algorithm \ref{algo: adaptive noniterative SWEs}.
We take advantage of the fact that the criterion for determining the nature of the waves
can be reused to compute the CC approximation.
To also obtain an approximation of $u_*$, \eqref{eq: speed star SWEs} can still be used.

\begin{algorithm}
\caption{Adaptive non-iterative Riemann solver for the shallow water equations}\label{algo: adaptive noniterative SWEs}
\begin{algorithmic}[1]
    \Require $\bfql, \bfqr$
    \Ensure $\tilde{h}_*$
    \Comment Approximation of $h_*$
    \State $\hmin\xleftarrow{} \min(h_\ell,h_r)$
    \State {\bf if} $\phi(\hmin)>0$ {\bf then}
    \Comment{Check if solution has two rarefactions }
    \State $\qquad \tilde{h}_*\xleftarrow{}h_{RR}$
    \Comment{Exact solution}
    \State {\bf else} 
    \State $\qquad \hmax\xleftarrow{} \max(h_\ell,h_r)$
    \State $\qquad$ {\bf if} $\phi(\hmax)<0$ {\bf then}
    \Comment{Two shocks (Strong waves)}
    \State $\qquad \qquad \tilde{h}_*\xleftarrow{}h_{QA}$
    \Comment{Quadratic approximation}
    \State $\qquad {\bf else}$
    \Comment{One rarefaction and one shock (Weak waves)}
    \State $\qquad \qquad \tilde{h}_*\xleftarrow{}h_{CC}$
    \Comment{Convex combination}
    \State $\qquad$ {\bf end if}
    \State {\bf end if}
\end{algorithmic}
\end{algorithm}

Similarly, we also propose an adaptive solver for the EEs in \ref{algo: adaptive iterative EEs}.
However, as discussed in Section \ref{sec:results EEs}, the CC approximation of $p_*$ might not be accurate enough
when the Riemann solution involves strong waves. Therefore, this proposed adaptive solver employs iteration for
two-shock solutions.

\begin{algorithm}
\caption{Adaptive iterative Riemann solver for the Euler equations}\label{algo: adaptive iterative EEs}
\begin{algorithmic}[1]
    \Require $\bfql, \bfqr$
    \Ensure $\tilde{p}_*$
    \Comment Approximation of $p_*$
    \State $\pmin\xleftarrow{} \min(p_\ell,p_r)$
    \State {\bf if} $\phi(\pmin)>0$ {\bf then}
    \Comment{Check if solution has two rarefactions }
    \State $\qquad \tilde{p}_*\xleftarrow{}p_{RR}$
    \Comment{Exact solution}
    \State {\bf else} 
    \State $\qquad \pmax\xleftarrow{} \max(p_\ell,p_r)$
    \State $\qquad$Compute $\phi(\pmax)$
    \State $\qquad p^{[0]}\xleftarrow{} p_{CC}$
    \State $\qquad$ {\bf if} $\phi(\pmax)>0$ {\bf then}
    \Comment{One rarefaction and one shock (Weak waves)}
    \State $\qquad \qquad \tilde{p}_*\xleftarrow{}p^{[0]}$
    \Comment{Use convex combination as solution}
    \State $\qquad {\bf else}$
    \Comment{Two shocks (Strong waves)}
    \State $\qquad \qquad p^{[0]}\xleftarrow{}\max(\pmax,p^{[0]}-\phi(p^{[0]})/\phi'(p^{[0]}))$
    \Comment More accurate Positive Newton modification
    \State $\qquad \qquad\tilde{p}_*\xleftarrow{} \text{Newton}(p^{[0]})$
    \Comment{Newton iterations}
    \State $\qquad$ {\bf end if}
    \State {\bf end if}
\end{algorithmic}
\end{algorithm}

\bibliographystyle{plain}                                                                                                                                                                               
\bibliography{bib.bib}

\end{document}